\documentclass[11pt,a4paper]{article}
\usepackage[T1]{fontenc}
\usepackage[utf8]{inputenc}
\usepackage{lmodern}
\usepackage{amsmath,amssymb,amsthm,mathtools}
\usepackage[margin=28mm]{geometry}
\usepackage{pgfplots}
\usepackage{microtype}
\usepackage{hyperref}
\hypersetup{colorlinks=true,linkcolor=black,citecolor=black,urlcolor=black,
	pdftitle={Entire p-harmonic functions with an isolated critical point},
	pdfauthor={xxxxx}}
\usepackage{xcolor}

\newtheorem{theorem}{Theorem}[section]
\newtheorem{proposition}[theorem]{Proposition}
\newtheorem{lemma}[theorem]{Lemma}
\newtheorem{corollary}[theorem]{Corollary}
\theoremstyle{remark}
\newtheorem{remark}[theorem]{Remark}
\numberwithin{equation}{section}
\newcommand{\R}{\mathbf R}
\newcommand{\Sph}{\mathbf S^{n-1}}
\newcommand{\avgint}{\mathchoice
	{\AvgInt\displaystyle\textstyle{-}}
	{\AvgInt\textstyle\scriptstyle{-}}
	{\AvgInt\scriptstyle\scriptscriptstyle{-}}
	{\AvgInt\scriptscriptstyle\scriptscriptstyle{-}}\!\int}
\newcommand{\AvgInt}[3]{%
	\setbox0=\hbox{$#1{#2#3}{\int}$}%
	\vcenter{\hbox{$#2#3$}}\kern-.5\wd0}
\newcommand{\dd}{\,d}
\DeclareMathOperator{\diver}{div}
\title{Entire $p$-harmonic functions with an isolated critical point\\
	and failure of  $C^1$-regularity of the natural gradient}

\date{}

\author{Erno Kauranen and Jarkko Siltakoski}

\begin{document}
	\maketitle
	
	\begin{abstract}
		For every $n\ge2$ and $1<p<\infty$, we show an existence of a nonconstant homogeneous
		entire $p$-harmonic function whose only critical point is the origin.
		The angular parts of these functions are axially symmetric and even across the equator. To establish the solution we use contraction argument at the pole in conjuction with a shooting argument to the equator. It follows that in every dimension $n\ge3$ and for $p<2$ sufficiently
		close to $2$, the natural gradient has pointwise H\"older
		exponent strictly below one at the origin. Such examples settle the remaining scalar case of the conjecture of Balci, Diening, and Weimar by disproving both the $C^1$ assertion and the linear $L^2$ mean oscillation estimate.
	\end{abstract}
	
	\section{Introduction}
	
	Let   $n\ge1$ be an integer and $1<p<\infty$. A function
	$u\in W^{1,p}_{\mathrm{loc}}(\R^n)$ is called \emph{$p$-harmonic} if it is a weak solution to $\mathrm{div} (|\nabla u|^{p-2} \nabla u)= 0$ that is, the equation
	\begin{equation}\label{eq:weak}
		\int_{\R^n}\langle|\nabla u|^{p-2}\nabla u,\nabla\varphi\rangle\dd x=0
	\end{equation} holds for every $\varphi\in C_0^\infty(\R^n)$.
	For background on weak solutions and the regularity theory
	of the $p$-Laplace equation, we refer to Lindqvist's notes
	\cite{Lindqvist2006}.
	In this paper we denote
	\begin{equation*}
		V_p(\nabla u):=|\nabla u|^{\frac{p-2}{2}}\nabla u,
		\qquad V_p(0):=0
	\end{equation*}
	which is the natural gradient associated with the $p$-energy.

	Homogeneous (or quasiradial) solutions form a natural class for the
	$p$-Laplace equation. In fact, Kr\'ol's construction and its later
	developments give homogeneous $p$-harmonic functions of different
	orders in $\mathbf{R}^2$, see \cite{Krol1973,Aronsson1988}. It is also known that the critical
	points of a nonconstant planar $p$-harmonic function forms a discrete set
	\cite{Manfredi1988} and even the precise weak regularity of such $p$-harmonic functions is known \cite{IwaniecManfredi1989} from which it follows that natural gradient is of class $C^{1}$ in the plane. Moreover, separation of variables and the resulting nonlinear
	equation on the sphere have been considered in cones and on manifolds,
	see \cite{PorrettaVeron2009, Tolksdorf1983} and the references therein. Constructions based on isoparametric polynomials can be found in \cite{Tkachev2020}.
	
	The higher-dimensional theory is less understood. Although planar solutions admit cylindrical extensions to $\R^n$ such extensions are not genuinely higher dimensional. In particular, they do not have the isolated critical point property studied here since their critical sets contain an $(n-2)$-dimensional subspace. We overcome this limitation by constructing homogeneous solutions whose only critical point is the origin. Our main theorem is the following.

	\begin{theorem}\label{thm:entire}
		For every $n\ge2$ and $1<p<\infty$, there are $k>1$ and
		$f\in C^2([0,\pi])$, with $f(0)=1$, such that
		\begin{equation}\label{eq:ansatz}
			u(x)=|x|^k f(\theta),\qquad
			\theta=\arccos\frac{x_n}{|x|}\quad(x\ne0),\qquad u(0)=0,
		\end{equation}
		is a $p$-harmonic function in
		$C^1(\R^n)\cap C^2(\R^n\setminus\{0\})$ such that its only critical point is the origin.
		Moreover, $f'(0)=f'(\pi)=0$ and $f(\pi-\theta)\equiv f(\theta)$.
	\end{theorem}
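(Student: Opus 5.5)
Substituting the ansatz \eqref{eq:ansatz} reduces the problem to an ODE on $[0,\pi]$. For $u=r^kf(\theta)$ we have $|\nabla u|^2=r^{2k-2}(k^2f^2+f'^2)$. Writing the $p$-Laplacian in spherical coordinates with axial symmetry (so the area weight is $\sin^{n-2}\theta$), $p$-harmonicity away from the origin becomes
\[
\frac{1}{\sin^{n-2}\theta}\Bigl(\sin^{n-2}\theta\,(k^2f^2+f'^2)^{\frac{p-2}{2}}f'\Bigr)'+k\bigl(k(p-1)+n-p\bigr)(k^2f^2+f'^2)^{\frac{p-2}{2}}f=0 .
\]
The coefficient comes from $\partial_r\bigl(r^{n-1}r^{(k-1)(p-1)}\bigr)$. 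The gradient vanishes exactly where $f=f'=0$, since $k>0$. So the critical-point statement is equivalent to $(f,f')\ne(0,0)$ on $[0,\pi]$, i.e.\ every zero of $f$ is simple. Expanding the equation gives
\[
\Bigl(1+(p-2)\tfrac{f'^2}{k^2f^2+f'^2}\Bigr)f''=\text{lower order},
\]
which is uniformly nondegenerate because the bracket lies between $\min(1,p-1)$ and $\max(1,p-1)$, as long as $(f,f')\ne0$. Since that quantity is conserved away from degeneracy, uniqueness of the ODE will make zeros automatically simple.

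The first step is the singular pole $\theta=0$. I would prescribe $f(0)=1$, $f'(0)=0$ and rewrite the equation as a fixed-point problem
\[
f'(\theta)=\Phi^{-1}\Bigl(-\frac{1}{\sin^{n-2}\theta}\int_0^\theta \sin^{n-2}s\,\lambda\,(k^2f^2+f'^2)^{\frac{p-2}{2}}f\,ds\Bigr),
\]
where $\Phi(f')=(k^2f^2+f'^2)^{\frac{p-2}{2}}f'$ is inverted in $f'$ for fixed $f$. On $[0,\delta]$, with $f$ near $1$, this map is a contraction in a weighted space such as $\sup|f'|/\theta$. The output is a local $C^2$ solution depending continuously on $k$, with $f''(0)=-\lambda k^{2-p}/(n-1)\cdot(\text{factor})$. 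Standard ODE theory then continues the solution as long as $(f,f')\ne(0,0)$. Uniqueness in the nondegenerate regime shows this never fails at a finite interior point: if $f=f'=0$ there, $f\equiv0$ would be the unique solution, contradicting $f(0)=1$.

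The second step is a shooting argument in $k$ for the symmetry condition $f'(\pi/2;k)=0$. The even reflection $f(\pi-\theta)$ then solves the same equation, because $\sin$ is symmetric. Gluing the two halves gives $f\in C^2$, with $f'(\pi)=0$ automatic and the behavior at the south pole matching the regular one. Regularity at the poles in $\R^n$ holds because $f$ is even in $\theta$ and smooth. $u\in C^1$ follows from $k>1$. The equation holds weakly across the origin because a point has zero $p$-capacity for $p\le n$; in general it follows since $|\nabla u|^{p-1}\sim r^{(k-1)(p-1)}$ is integrable on small spheres with vanishing flux. To shoot, I would introduce a Prüfer angle $\phi$ with $\tan\phi=-f'/(kf)$, which is well defined because $(f,f')\ne0$. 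I would then look at $\phi(\pi/2;k)$. At $k$ slightly above $1$ the solution barely turns, whereas for $k\to\infty$ the large eigenvalue $\lambda=k(k(p-1)+n-p)$ forces oscillation. In the rescaled variable the equation is approximately $f''+c k^2 f=0$ away from the pole, so $\phi(\pi/2;k)\to\infty$. Continuity of $\phi(\pi/2;\cdot)$ in $k$ and the intermediate value theorem then give some $k>1$ with $\phi(\pi/2;k)\in\pi\mathbf Z$, that is $f'(\pi/2)=0$.

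The main obstacle is the lower endpoint. I need $\phi(\pi/2;k)<\pi$ at some $k>1$, or else a monotonicity and comparison argument. The point is that $k=1$ is borderline: $u=x_n$ gives $f=\cos\theta$, which has $\phi(\pi/2;1)=\pi/2$ exactly. That is not a multiple of $\pi$ and sits strictly between $0$ and $\pi$. I would therefore show $\phi(\pi/2;k)$ is continuous and unbounded on $[1,\infty)$ and take the first $k>1$ where it hits $\pi$; this requires ruling out that $\phi$ stays below $\pi$. If needed, I would instead hit the level $\pi$ via a Sturm comparison with the linear equation, whose eigenvalues are explicit.
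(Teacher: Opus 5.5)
Your plan follows the paper's proof in outline. The shared steps are: reduction to the angular equation, a contraction at the singular pole, and shooting in $k$ with a Prüfer angle. Your $\phi$, with $\tan\phi=-f'/(kf)$, is exactly the paper's phase $\psi$ from $f=\rho\cos\psi$, $f'=-k\rho\sin\psi$. The lower endpoint is $k=1$, $f=\cos\theta$, with phase $\pi/2$ at the equator. Then come even reflection and removal of the origin by the flux bound $C\varepsilon^{n-1+(k-1)(p-1)}\to0$.

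The paper never solves for $f$ directly. It runs the contraction on $z=\psi/\theta$ for the phase equation and then sets $\rho=\exp\int_0^\theta H$, so $Q=k^2\rho^2>0$ holds by construction. You run the contraction on $f'$ and get $(f,f')\ne(0,0)$ from uniqueness. That works, but ``uniqueness in the nondegenerate regime'' does not apply at the point in question, which is exactly where $Q=0$. The correct justification is this: in nondivergence form $f''=G(\theta,f,f')$, the right side is positively $1$-homogeneous in $(f,f')$ and smooth off the origin, because your bracket is $\ge\min\{1,p-1\}$. Hence $G$ is Lipschitz through $(0,0)$, and $f\equiv0$ is the only solution reaching $(0,0)$.

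The ``main obstacle'' you describe is not one. The value $\phi(\pi/2;1)=\pi/2<\pi$ already serves as the lower endpoint. Continuity together with $\phi(\pi/2;k)\to\infty$ then gives a solution of $\phi(\pi/2;k)=\pi$ by the intermediate value theorem, automatically with $k>1$. Monotonicity in $k$, which the paper proves, is only needed for uniqueness of $k$.

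The step that does need an argument is $\phi(\pi/2;k)\to\infty$, which you only motivate heuristically. A Sturm comparison with a fixed linear equation is not directly available. In nondivergence form the coefficient of $f$ depends on $f'^2/Q$, and for $p<3/2$ and large $k$ it changes sign. Use the phase equation you already have instead: differentiating $\tan\phi=-f'/(kf)$ and inserting the ODE gives
\[
\phi'=\frac{(p-1)k+(n-p)\cos^2\phi-(n-2)\cot\theta\,\sin\phi\cos\phi}{\cos^2\phi+(p-1)\sin^2\phi}.
\]
Wherever $\phi\in\pi\mathbf Z$ the right side equals $(p-1)k+n-p>0$, so $\phi>0$ on $(0,\pi/2]$. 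On $[\pi/4,\pi/2]$, where $0\le\cot\theta\le1$, the right side is at least $\bigl((p-1)k-|n-p|-(n-2)/2\bigr)/\max\{1,p-1\}$. Integrating gives $\phi(\pi/2;k)\to\infty$.

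Two further routine facts should be stated. First, at the poles you only know $f\in C^2$ with $f'(0)=0$, not smoothness; this suffices for $y\mapsto f(\arcsin|y|)$ to be $C^2$. Second, the equation extends to the axis because the divergence of the nonvanishing flux is continuous off the origin. With these points filled in, your argument is complete.
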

	
	The assumption $n\ge2$ is necessary because in one dimension every solution of the equation is affine. In higher dimensions, we must prevent the radial and angular derivatives from vanishing simultaneously. For our ansatz, this means that the angular factor and its derivative must have no common zero. We express these two quantities using an amplitude and a phase. We first solve the phase equation and then obtain the amplitude as the exponential of a finite integral, which makes it strictly positive. Since the angular factor is proportional to the cosine of the phase and its derivative to the sine, they cannot both vanish. Finally, we choose the homogeneity exponent so that the angular derivative vanishes at the equator. This allows us to reflect the solution from the upper hemisphere to the whole sphere while keeping the gradient nonzero away from the origin. For a shooting construction of positive homogeneous solutions
in a half space, see also \cite{LlorenteManfrediTroyWu2019}.

	The construction has a further consequence for the regularity of the natural gradient. To put this in context,  Balci, Diening, and Weimar conjectured that the natural gradient of a
	$p$-harmonic map belongs to $C^1$ and satisfies a linear $L^2$
	mean oscillation decay estimate
	\cite[Conjecture~2.28]{BalciDieningWeimar2020}. Recently, Balci, Behn,
	Diening, and Storn constructed homogeneous $p$-harmonic maps
	$U:\R^n\to\R^N$ for every $n\geq2$ and suitable $N>1$, see \cite{BalciBehnDieningStorn2026}.
	Their construction starts from harmonic homogeneous polynomial maps $h$
	satisfying $|h(x)|=|x|^k$ and produces maps of the form
	$U(x)=|x|^{\gamma-k}h(x)$. For $k\geq2$, one has
	$\gamma>1$ and it follows that
	$\nabla U(x)\neq0$ for $x\neq0$, while $\nabla U(0)=0$. These examples
	yield new upper bounds for the regularity of vectorial $p$-harmonic maps and,
	when $1<p<2$ and $n\geq3$, disprove the conjectured $C^1$ regularity of
	$V_p(\nabla U)$. Since their ansatz requires $N>1$ it does not cover the scalar case. The following corollary disproves the scalar version of the conjecture in every dimension $n \geq 3$, for $p<2$ sufficiently close to $2$.

\begin{corollary}\label{thm:counterexample}
	For every $n\ge3$ there is $\delta_n\in(0,1)$ such that for every
	$p\in(2-\delta_n,2)$ there exists a $p$-harmonic function
	$u_p\in C^2(\R^n)$ whose unique critical
	point is the origin, and there are
	$\beta_n(p)\in(0,1)$ and constants $0<c\le C<\infty$ such that
	\begin{equation}\label{eq:pointwise}
		c|x|^{\beta_n(p)}
		\le |V_p(\nabla u_p)(x)|
		\le C|x|^{\beta_n(p)}
	\end{equation}
	for every $x\in\R^n$.
	In particular, $V_p(\nabla u_p)$ is not locally Lipschitz at the origin.
	Moreover, for every fixed $r>0$ there is no finite constant $C_0$ such that
	\begin{equation}\label{eq:failure-linear-decay}
		\frac{\left(
			\avgint_{B_{\theta r}}
			\left|
			V_p(\nabla u_p)
			-
			\avgint_{B_{\theta r}}V_p(\nabla u_p)\dd x
			\right|^2
			\dd x
			\right)^{1/2} }{\left(
			\avgint_{B_r}
			\left|
			V_p(\nabla u_p)
			-
			\avgint_{B_r}V_p(\nabla u_p)\dd x
			\right|^2
			\dd x
			\right)^{1/2} }
		\le
		C_0\theta
	\end{equation}
	holds for every $\theta\in(0,1]$.
\end{corollary}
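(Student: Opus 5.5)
We take $u_p$ to be the function from Theorem~\ref{thm:entire}, $u_p(x)=|x|^k f(\theta)$ with $k=k(n,p)>1$. Everything then reduces to the single fact that for $n\ge3$ and $p<2$ close to $2$ the exponent $k$ satisfies $k<2$. Once this is known, $\beta_n(p)$ is read off from homogeneity and the remaining claims follow.

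\emph{Step 1: the bound on $|V_p(\nabla u_p)|$.} Since $\nabla u_p$ is homogeneous of degree $k-1$, we have $\nabla u_p(x)=|x|^{k-1}G(x/|x|)$ with $G$ continuous on $\Sph$. Its components are $kf$ (radial) and $f'$ (angular), so $|G|^2=k^2f^2+f'^2$. By Theorem~\ref{thm:entire} the origin is the only critical point, so $G$ does not vanish, and compactness of $\Sph$ gives $0<m\le|G|\le M$. Hence
\[
|V_p(\nabla u_p)(x)|=|\nabla u_p(x)|^{p/2}\simeq|x|^{(k-1)p/2},
\]
which is \eqref{eq:pointwise} with $\beta_n(p)=(k-1)p/2$. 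We need $\beta_n(p)\in(0,1)$: positivity comes from $k>1$, and $\beta<1$ is equivalent to $k<1+2/p$. As $p\to2$ the right side tends to $2$, so it suffices to show that $k(n,p)$ tends to the linear-limit value below $2$.

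\emph{Step 2: the limit $p\to2$ and the regularity claims.} At $p=2$ the construction selects a harmonic $|x|^kf(\theta)$ with $f$ even across the equator, $f'=0$ at the equator and at the poles, and no common zeros of $f$ and $f'$. The natural candidate is the zonal harmonic of degree $2$, $x_n^2-|x'|^2/(n-1)$, whose gradient vanishes on the plane $x_n=0$ only at the origin. So $k(n,2)=2$, which is borderline, and we must compute the sign of $\partial_pk$ at $p=2$. I would do this by differentiating the phase/shooting equation in $p$ (implicit function theorem at the simple root $k=2$ of the shooting map) and extracting the first-order correction $k(n,p)=2+\kappa_n(p-2)+o(p-2)$. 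An alternative is to use the explicit planar value and compare.

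We need $\beta=(k-1)p/2<1$. Writing $k=2+\kappa(p-2)$, we get
\[
\beta-1=(p-2)\Bigl(\tfrac12+\kappa\Bigr)+o(p-2),
\]
so for $p<2$ we need $\kappa>-1/2$. This should hold exactly when $n\ge3$: in $n=2$ the known Iwaniec--Manfredi theory gives $C^1$, which corresponds to $\kappa=-1/2$. \textbf{Computing $\kappa_n$ and verifying $\kappa_n>-1/2$ for $n\ge3$ is the main obstacle.} It requires continuity and differentiability of the shooting solution in $p$, which I would extract from the contraction construction, whose fixed point depends smoothly on parameters.

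Two further points need care.

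\emph{$C^2$ regularity at the origin.} The requirement $u_p\in C^2(\R^n)$ at the origin needs $k>2$, since $D^2u_p$ is homogeneous of degree $k-2$. So the sign goes the other way: we need $2<k<1+2/p$, which is a nonempty interval because $p<2$. We must therefore check $k>2$ as well, i.e. $-1/2<\kappa_n<0$. If only $k\le2$ were available, I would weaken the claim to $C^1$.

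\emph{Failure of Lipschitz continuity and of \eqref{eq:failure-linear-decay}.} The failure of local Lipschitz continuity is immediate from $\beta<1$ together with $V_p(\nabla u_p)(0)=0$. For \eqref{eq:failure-linear-decay}, write $\Phi(\rho)$ for the oscillation on $B_\rho$. By homogeneity and scaling, $\Phi(\theta r)=\theta^\beta\Phi(r)$, and $\Phi(r)>0$ because $V_p(\nabla u_p)$ is nonconstant. The ratio in \eqref{eq:failure-linear-decay} therefore equals $\theta^\beta$, and $\theta^{\beta-1}\to\infty$ as $\theta\to0$, so no finite $C_0$ works.
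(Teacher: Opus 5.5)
\textbf{There is a genuine gap: the key inequality $-\tfrac12<k_n'(2)<0$ is never established.}

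Your reduction is sound and matches the paper's. Set $\beta_n(p)=\tfrac p2(k_n(p)-1)$. The two-sided bound on $|V_p(\nabla u_p)|$, the failure of Lipschitz continuity, the scaling identity $\Phi(\theta r)=\theta^{\beta_n(p)}\Phi(r)$ with $\Phi(r)>0$, and $C^2$ regularity at the origin all follow once you know $2<k_n(p)<1+2/p$ for $p<2$ near $2$. Equivalently, $k_n$ must be differentiable at $2$ with $-\tfrac12<k_n'(2)<0$. Your opening sentence, which reduces everything to $k<2$, has the wrong sign; you correct this later.

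This inequality is the whole substance of the corollary, and you leave it as ``the main obstacle''. Your expansion $k_n(p)=2+\kappa_n(p-2)+o(p-2)$ presupposes differentiability of $k_n$. The paper's lemmas give only continuity of $\Psi$ and of the branch. The implicit-function route you sketch would require:
\begin{itemize}
\item joint $C^1$ dependence of the contraction fixed point and of $\Psi$ on $(p,k)$;
\item a proof that $\partial_k\Psi(2,2)\neq0$;
\item an actual solution of the linearized phase equation about the phase of $nx_n^2-|x|^2$;
\item extraction of both sign bounds from the result.
\end{itemize}
None of this is done. The appeal to Iwaniec--Manfredi is a heuristic about $n=2$ and says nothing about $n\ge3$. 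Your identification $k_n(2)=2$ also needs a check: the phase of $(n\cos^2\theta-1)/(n-1)$ must have the pole expansion of Lemma~\ref{lem:pole} and equal $\pi$ at the equator, so that uniqueness of the root applies.

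The paper avoids differentiating the shooting map with a projection argument. On the sphere, the equation for $u_p=r^{k_n(p)}f_p$ reads
\[
\Delta_{\Sph}f_p+k_n(p)(k_n(p)+n-2)f_p+(p-2)\Delta^N_\infty u_p=0,
\]
where $\Delta^N_\infty u_p=\langle D^2u_p\nabla u_p,\nabla u_p\rangle/|\nabla u_p|^2$. Test this against the eigenfunction $f_2=n\omega_n^2-1$, which satisfies $\Delta_{\Sph}f_2=-2nf_2$, and use self-adjointness to get
\[
(k_n(p)-2)(k_n(p)+n)\avgint_{\Sph}f_pf_2\,dS=-(p-2)\avgint_{\Sph}(\Delta^N_\infty u_p)f_2\,dS.
\]
So the difference quotient $(k_n(p)-2)/(p-2)$ is an explicit ratio. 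The $C^2$ convergence $f_p\to f_2$ of Lemma~\ref{lem:branch}, together with $|\nabla u_2|\neq0$ on $\Sph$, lets you pass to the limit. This gives differentiability and
\[
k_n'(2)=-\avgint_{\Sph}\frac{(n\omega_n^2-1)^2}{1+n(n-2)\omega_n^2}\,dS<0 .
\]

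The lower bound still needs a real computation. Use the spherical identity $\avgint_{\Sph}f_2\,g(\omega_n^2)\,dS=2\avgint_{\Sph}\omega_n^2(1-\omega_n^2)g'(\omega_n^2)\,dS$ and the weight $R=\omega_n^2(1-\omega_n^2)/(1+n(n-2)\omega_n^2)^2$. This yields
\[
1+2k_n'(2)=n(n-2)\Bigl(n^2\avgint_{\Sph}\omega_n^2R\,dS-\avgint_{\Sph}R\,dS\Bigr).
\]
A second application of the identity, with the pointwise bound $4Ns\le(1+Ns)^2$ for $N=n(n-2)$, gives $\avgint_{\Sph}\omega_n^2R\,dS\ge\frac{2}{n^2-n+4}\avgint_{\Sph}R\,dS>n^{-2}\avgint_{\Sph}R\,dS$. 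The prefactor $n(n-2)$ is what makes the bound strict for $n\ge3$ and zero for $n=2$.

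Without this argument, or an equivalent explicit evaluation of $\kappa_n$, your proof shows neither $\beta_n(p)<1$ nor $k_n(p)>2$. So neither the H\"older-type bound nor $u_p\in C^2(\R^n)$ is established.
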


The paper is organized as follows. Section 2 derives the phase function construction
and in Section 3 we reflect the solution to the whole sphere. In the last section we prove the failure of the conjecture related to the natural gradient. Finally, the coordinate reductions are proved in the Appendix.

\newpage
\section{The construction in the upper hemisphere}

Our aim is to show a existence of certain homogenous $p$-harmonic functions and it is therefore convenient to seek for such solutions with the aid of the spherical $p$-Laplace equation. The objective is to first construct a desired function in the upper hemisphere and then reflect it across the equator.

To this end fix $n\ge2$ and put $L=\pi/2$. We first construct angular functions on
$[0,L]$ for every $p>1$ and $k\ge1$. The choice of $k$ and the extension
to the whole sphere are made in Section~\ref{sec:reflection}.
For the homogeneous ansatz \eqref{eq:ansatz} for the $p$-Laplace equation, set
\begin{equation}\label{eq:Q-Lambda}
 Q=k^2f^2+(f')^2,\qquad \Lambda(p,k)=k\big((p-1)k+n-p\big).
\end{equation}
Then (assuming that  $Q>0$) the $p$-Laplace equation becomes
\begin{equation}\label{eq:angular}
 \big(\sin^{n-2}\theta\,Q^{(p-2)/2}f'\big)'
 +\Lambda(p,k)\sin^{n-2}\theta\,Q^{(p-2)/2}f=0, \quad 0<\theta<\pi.
\end{equation}
The computation is given in the Appendix.
We now seek an amplitude $\rho>0$ and a phase
$\psi$, with $\rho(0)=1$ and $\psi(0)=0$ such that
\begin{equation}\label{eq:polar-data}
 f_{p,k}=\rho\cos\psi\qquad\text{and}\qquad f_{p,k}'=-k\rho\sin\psi.
\end{equation}
Suppressing the parameters when convenient, the resulting equations are (see Appendix for derivation)
\begin{align}
 \psi'&=F(\theta,\psi;p,k)
 =\frac{(p-1)k+(n-p)\cos^2\psi
 -(n-2)\cot\theta\sin\psi\cos\psi}{D_p(\psi)},\label{eq:phase}\\
 \frac{\rho'}{\rho}&=H(\theta,\psi;p,k)
 =\frac{((p-2)k+n-p)\sin\psi\cos\psi
 -(n-2)\cot\theta\sin^2\psi}{D_p(\psi)},\label{eq:amplitude}
\end{align}
where $D_p(v)=\cos^2v+(p-1)\sin^2v$.
In particular, $\min\{1,p-1\}\le D_p(v)\le\max\{1,p-1\}$.
We separate the phase construction, its endpoint properties, and the
recovery of the angular function into three lemmas.

\begin{lemma}[The phase on the upper hemisphere]\label{lem:pole}
 For every $p>1$ and $k\ge1$, there is a unique
 $\psi(\,\cdot\,;p,k)\in C^1([0,L])$ solving \eqref{eq:phase} on $(0,L]$
 and satisfying
 \begin{equation}\label{eq:pole-expansion}
  \psi(\theta;p,k)=z_0(p,k)\theta+O(\theta^3),\qquad
  z_0(p,k)=\frac{(p-1)k+n-p}{n-1}
  \quad(\theta\to0).
 \end{equation}
 Moreover, $\psi'=z_0+O(\theta^2)$ at zero, and the phase depends
 continuously on $(p,k)$ in $C^1([0,L])$.
\end{lemma}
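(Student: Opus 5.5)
The equation \eqref{eq:phase} is singular at $\theta=0$ because of the term $-(n-2)\cot\theta\sin\psi\cos\psi/D_p(\psi)$. When $n=2$ this term is absent and $F$ is smooth and globally Lipschitz in $\psi$ (with $|F|\le C$). So for $n=2$ the standard Picard theory gives a unique global solution on $[0,L]$ with $\psi(0)=0$, and the expansion follows from Taylor's formula together with the parity of $F$ in $\psi$. For $n\ge3$ the plan is to substitute $\psi(\theta)=\theta w(\theta)$ and solve a fixed point problem for $w$ on a short interval $[0,\varepsilon]$. After that, the solution is continued to $[0,L]$ as a regular ODE.

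\textbf{Local problem at the pole.} Near $\psi=0$ we have $D_p(\psi)=1+O(\psi^2)$, $\cos^2\psi=1-O(\psi^2)$, and $\cot\theta\sin\psi\cos\psi=\psi/\theta+O(\theta\psi)+O(\psi^3/\theta)$. Hence, with $a=(p-1)k+n-p$,
\[
 \psi'=a-(n-2)\frac{\psi}{\theta}+G(\theta,\psi),
\]
where $G$ is smooth in $(\theta,\psi)$ for bounded $\psi/\theta$ and satisfies $G=O(\theta^2)$ when $\psi=O(\theta)$. Write $\psi=\theta w$ and multiply by $\theta^{n-2}$. This gives $(\theta^{n-1}w)'=\theta^{n-2}\,(a+G(\theta,\theta w))$. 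The solutions that stay bounded at $0$ are exactly the fixed points of
\[
 (Tw)(\theta)=\frac{1}{\theta^{n-1}}\int_0^\theta s^{n-2}\big(a+G(s,sw(s))\big)\,ds .
\]
The other solutions carry an extra term $c\,\theta^{-(n-1)}$, so they are unbounded and do not satisfy $\psi(\theta)\to0$ with $\psi=O(\theta)$. This is where uniqueness comes from.

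\textbf{Contraction.} Take $X=\{w\in C([0,\varepsilon]):|w-z_0|\le 1\}$ with the sup norm. For $w\in X$ we have $|G(s,sw)|\le Cs^2$, and the Lipschitz bound $|G(s,sw_1)-G(s,sw_2)|\le Cs^2|w_1-w_2|$ holds; both rely on $G$ vanishing to second order. Then $Tw-z_0=O(\theta^2)$ and $\|Tw_1-Tw_2\|\le C\varepsilon^2\|w_1-w_2\|/(n+1)$, where $a/(n-1)=z_0$. Choosing $\varepsilon$ small makes $T$ a contraction of $X$ into itself. The constants can be taken uniform for $(p,k)$ in compact sets, so the fixed point depends continuously on $(p,k)$. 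The fixed point satisfies $w=z_0+O(\theta^2)$, hence $\psi=z_0\theta+O(\theta^3)$. From $\psi'=a-(n-2)w+G$ we then get $\psi'=a-(n-2)z_0+O(\theta^2)=z_0+O(\theta^2)$. In particular $\psi\in C^1([0,\varepsilon])$ with $\psi'(0)=z_0$.

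\textbf{Continuation to $[0,L]$.} On $[\varepsilon,L]$ the right side $F$ is smooth, and since $\cot\theta$ is bounded there, $|F|\le C(1+|\cot\theta|)$. So $F$ is globally Lipschitz in $\psi$ on $[\varepsilon,L]\times\R$, and the solution extends uniquely to $[0,L]$ without blow-up. Continuous dependence on $(p,k)$ in $C^1$ follows from standard ODE dependence on the initial value $\psi(\varepsilon)$ and on the parameters, together with the identity $\psi'=F$.

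The main obstacle is the second step. One has to verify carefully that the remainder $G$ is $O(s^2)$ with a Lipschitz constant of order $s^2$ uniformly in $(p,k)$, which comes from expanding $1/D_p$ and $\cot\theta$. One also has to justify uniqueness among all $C^1$ solutions with $\psi(0)=0$: any such solution has $w$ bounded, hence lies in $X$ on a small enough interval, hence equals the fixed point.
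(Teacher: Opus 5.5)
Your proposal is correct and is essentially the paper's argument: you rescale $\psi=\theta w$, use the integrating factor $\theta^{n-1}$ to get the fixed-point map $w\mapsto z_0+\theta^{1-n}\int_0^\theta s^{n-2}G\,ds$ on the unit ball around $z_0$ (contraction constant of order $\varepsilon^2/(n+1)$, uniform on compact parameter sets), and continue to $[\varepsilon,L]$ as a regular ODE with globally Lipschitz right-hand side. The differences are cosmetic: the paper does not split off $n=2$, and it settles the step you flag as the main obstacle by noting that $\widetilde F(\theta,\xi)=F(\theta,\theta\xi)$ extends smoothly and evenly in $\theta$, so Taylor's formula writes the remainder as $\theta^2$ times a smooth function of $(\theta,\xi)$, which gives both the $O(\theta^2)$ bound and the $O(\theta^2)$ Lipschitz constant in $\xi$ at once.
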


\begin{proof}
 Write $\lambda=(p,k)$ and allow $\lambda$ to range over the open set
 $\mathcal P=(1,\infty)\times\R$.
 Define $\sigma(t)=\sin t/t$ for $t\ne0$, with $\sigma(0)=1$, and
 $\chi(\theta)=\cos\theta/\sigma(\theta)$ for $|\theta|<\pi$.
 Both functions are smooth and even, and $\chi(\theta)=\theta\cot\theta$
 for $\theta\ne0$. The rescaled coefficient has the smooth extension
 \[
 \widetilde F(\theta,\xi;\lambda)
 =\frac{(p-1)k+(n-p)\cos^2(\theta\xi)
 -(n-2)\chi(\theta)\xi\sigma(\theta\xi)\cos(\theta\xi)}
 {D_p(\theta\xi)}.
 \]
 For $\theta>0$ this equals $F(\theta,\theta\xi;\lambda)$.
 It is even in $\theta$, and its value at zero is
 $(n-1)z_0(\lambda)-(n-2)\xi$. Here it is the rescaled coefficient
 that extends to zero. Taylor's formula gives
 \begin{equation}\label{eq:pole-new-rescaling}
 \widetilde F(\theta,\xi;\lambda)
 =(n-1)z_0(\lambda)-(n-2)\xi+\theta^2G(\theta,\xi;\lambda),
 \end{equation}
 where $G(\theta,\xi;\lambda)=\int_0^1(1-s)
 \partial_1^2\widetilde F(s\theta,\xi;\lambda)\,ds$ is smooth. The symbol $\partial_1$ denotes differentiation in the first argument.

 Fix $\lambda_*\in\mathcal P$ and a compact rectangle
 $K\subset\mathcal P$ with $\lambda_*$ in its interior. Put $a=L/2$ and
 $E=\{(\theta,\xi,\lambda):0\le\theta\le a,\ \lambda\in K, \ |\xi-z_0(\lambda)|\le1\}$.
 Let $M=\max_E|G|$ and $M_1=\max_E|\partial_\xi G|$.
 Choose $0<\delta\le a$ so that $\delta^2M/(n+1)\le1/2$ and
 $q:=\delta^2M_1/(n+1)\le1/2$. On the closed unit ball $B$ of
 $C([0,\delta])$, define
 $(T_\lambda v)(\theta)=\theta^2\int_0^1s^n G\bigl(\theta s,z_0(\lambda)+v(\theta s);\lambda\bigr)\,ds$.
 Then $\|T_\lambda v\|_\infty\le1/2$ and
 $\|T_\lambda v-T_\lambda\widehat v\|_\infty
 \le q\|v-\widehat v\|_\infty$. Thus by the Banach fixed point theorem the operator $T_\lambda$ has a unique fixed
 point $v_\lambda\in B$. Setting $z_\lambda(\theta)=z_0(\lambda)+v_\lambda(\theta)$
we get 
 \begin{equation}\label{eq:pole-new-fixedpoint}
 z_\lambda(\theta)-z_0(\lambda)
 =\theta^2\int_0^1s^nG(\theta s,z_\lambda(\theta s);\lambda)\,ds
 \end{equation}
and in particular $|z_\lambda(\theta)-z_0(\lambda)|\le M\theta^2/(n+1)$.
For $\theta>0$, substitute $t=\theta s$ in the integral to rewrite \eqref{eq:pole-new-fixedpoint} as
\begin{equation*}
z_\lambda(\theta) - z_0 (\lambda) = \theta^{1-n}\int_0^\theta t^nG(t,z_\lambda(t);\lambda)\,dt.
\end{equation*}
This immediately yields $z_\lambda \in C^1([0, \delta])$ with $z_\lambda ^\prime(0)=0$. \begin{equation}\label{eq:pole-new-z-equation}
\theta z_\lambda'(\theta)+(n-1)(z_\lambda(\theta)-z_0(\lambda))
=\theta^2G(\theta,z_\lambda(\theta);\lambda).
\end{equation}
Define $\psi_\lambda(\theta)=\theta z_\lambda(\theta)$ in $[0, \delta]$.
Combining \eqref{eq:pole-new-rescaling} and
\eqref{eq:pole-new-z-equation} yields
\begin{equation}\label{eq:pole-new-local-phase}
\psi_\lambda'(\theta)
=z_0(\lambda)-(n-2)(z_\lambda(\theta)-z_0(\lambda))
+\theta^2G(\theta,z_\lambda(\theta);\lambda)
=\widetilde F(\theta,z_\lambda(\theta);\lambda).
\end{equation}
 This proves the phase equation and both expansions at zero, with
 remainders uniform for $\lambda\in K$.

 For uniqueness, let $\varphi$ be another phase with the stated
 expansion and put $\zeta(\theta)=\varphi(\theta)/\theta$ for $\theta>0$,
 with $\zeta(0)=z_0(\lambda)$. Then $\zeta-z_0(\lambda)=O(\theta^2)$
 and \eqref{eq:pole-new-rescaling} gives
 $$\theta\zeta'(\theta)+(n-1)(\zeta(\theta)-z_0(\lambda))=\theta^2G(\theta,\zeta(\theta);\lambda).$$
 Multiplication by $\theta^{n-2}$ and integration from zero give
 \eqref{eq:pole-new-fixedpoint} for $\zeta$, because the boundary term
 $\theta^{n-1}(\zeta-z_0(\lambda))$ tends to zero.
 On a sufficiently small interval $[0,\varepsilon]\subset[0,\delta]$,
 both $\zeta-z_0(\lambda)$ and $z_\lambda-z_0(\lambda)$ lie in the
 closed unit ball. The same contraction estimate, with coefficient
 $\varepsilon^2M_1/(n+1)<1$, makes them equal there. Uniqueness for the
 regular differential equation then gives equality on their common
 interval away from zero.

The same $\delta$ and $q$ work for all $\lambda\in K$. For
 $\lambda,\mu\in K$, the fixed point equations implies that
 $$\|v_\lambda-v_\mu\|_\infty \le\frac{1}{1-q}\|T_\lambda v_\mu-T_\mu v_\mu\|_\infty$$ and the right-hand side converges to zero as $\lambda\to\mu$
 by uniform continuity of $G$ on $E$. Thus $\lambda\mapsto z_\lambda$
 is continuous into $C([0,\delta])$. The identity
 $\psi_\lambda=\theta z_\lambda$ and
 \eqref{eq:pole-new-local-phase} give continuity of the phase in
 $C^1([0,\delta])$.

 On $[\delta,L]\times\R\times K$, the coefficients $F$ and
 $\partial_vF$ are bounded, since they are smooth and $\pi$ periodic
 in $v$. Write their bounds as $C_0$ and $C_1$, respectively.
 The solution starting from $\psi_\lambda(\delta)$ extends to $L$:
 the bound $|\psi_\lambda'|\le C_0$ gives a finite limit at any finite
 endpoint, where the usual existence theorem continues the solution.
 To check parameter continuity, put
 $\omega(\lambda,\mu)=\max_{[\delta,L]\times[0,\pi]}
 |F(\theta,v;\lambda)-F(\theta,v;\mu)|$. This tends to zero as
 $\lambda\to\mu$. Gronwall's inequality gives
 $$\|\psi_\lambda-\psi_\mu\|_{C([\delta,L])} \le e^{C_1(L-\delta)} \bigl(|\psi_\lambda(\delta)-\psi_\mu(\delta)| +(L-\delta)\omega(\lambda,\mu)\bigr).$$
 The equations also bound the norm of the derivative difference by
 $C_1\|\psi_\lambda-\psi_\mu\|_{C([\delta,L])}+\omega(\lambda,\mu)$.
 This proves continuity in $C^1([0,L])$. Local uniqueness at zero and
 regular uniqueness away from zero show that all choices of $K$ and
 $\delta$ give the same phase. Since $\lambda_*$ was arbitrary, the phases define a continuous map
$\mathcal P\to C^1([0,L])$.
\end{proof}

\begin{lemma}[The endpoint map]\label{lem:shooting}
 The map $\Psi(p,k)=\psi(L;p,k)$ is continuous for $p>1$ and $k\ge1$.
 For each fixed $p>1$ it is strictly increasing in $k$, and
 \begin{equation}\label{eq:shooting-properties}
  \Psi(p,1)=\frac\pi2,\qquad \lim_{k\to\infty}\Psi(p,k)=+\infty.
 \end{equation}
\end{lemma}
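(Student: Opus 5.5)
The plan has four parts: continuity from Lemma~\ref{lem:pole}, the value at $k=1$ from an explicit solution, monotonicity by a first-touching comparison argument, and the limit from a lower bound on $\psi'$ away from the pole.

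\emph{Continuity.} Continuity of $\Psi$ follows at once from Lemma~\ref{lem:pole}, since evaluation at $L$ is continuous on $C^1([0,L])$.

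\emph{The value at $k=1$.} I check that $\psi(\theta)=\theta$ solves \eqref{eq:phase} when $k=1$. The numerator of $F(\theta,\theta;p,1)$ is
\[
(p-1)+(n-p)\cos^2\theta-(n-2)\cos^2\theta=(p-1)\sin^2\theta+\cos^2\theta=D_p(\theta),
\]
so $F\equiv1$. Moreover $z_0(p,1)=1$, so $\psi(\theta)=\theta$ has the expansion \eqref{eq:pole-expansion} with zero remainder. By the uniqueness part of Lemma~\ref{lem:pole}, $\psi(\cdot;p,1)=\theta$, and hence $\Psi(p,1)=\pi/2$. This corresponds to $u=x_n$.

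\emph{Monotonicity in $k$.} Differentiating, $\partial_kF=(p-1)/D_p(\psi)>0$, so $F$ is strictly increasing in $k$ at every fixed $(\theta,\psi)$. Fix $1\le k_1<k_2$. Since $z_0(p,k_2)-z_0(p,k_1)=(p-1)(k_2-k_1)/(n-1)>0$, the expansion \eqref{eq:pole-expansion} gives $\psi_{k_2}(\theta)>\psi_{k_1}(\theta)$ for all small $\theta>0$.

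If the two phases met in $(0,L]$, let $\theta_1$ be the first meeting point and $v$ the common value there. Then $(\psi_{k_2}-\psi_{k_1})'(\theta_1)\le0$, because the difference decreases from positive values to zero. On the other hand,
\[
\psi_{k_2}'(\theta_1)-\psi_{k_1}'(\theta_1)=F(\theta_1,v;p,k_2)-F(\theta_1,v;p,k_1)>0,
\]
which is a contradiction. Hence $\psi_{k_2}>\psi_{k_1}$ on $(0,L]$, and in particular $\Psi$ is strictly increasing in $k$.

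The delicate point is the start near $\theta=0$, where $F$ is singular. It is handled entirely by the expansion at zero, so no comparison theorem is needed at the singular endpoint.

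\emph{The limit as $k\to\infty$.} First I show $\psi>0$ on $(0,L]$. Where $\psi=m\pi$ for an integer $m$, we have $\sin\psi\cos\psi=0$ and $\cos^2\psi=1$. Therefore
\[
F=(p-1)k+n-p\ge n-1>0 \qquad (k\ge1).
\]
So $\psi$ can cross multiples of $\pi$ only upward. Since $\psi>0$ just after $0$, it never returns to $0$, and in particular $\psi(L/2)\ge0$.

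On $[L/2,L]$ we have $0\le\cot\theta\le1$ and $|\sin\psi\cos\psi|\le\tfrac12$. With $D_p\le\max\{1,p-1\}$ this gives
\[
\psi'\ge\frac{(p-1)k-|n-p|-(n-2)/2}{\max\{1,p-1\}}=:ck-C.
\]
This bound is used once $k$ is large enough that the numerator is positive. Integrating over $[L/2,L]$,
\[
\Psi(p,k)\ge\psi(L/2)+\tfrac L2(ck-C)\ge\tfrac L2(ck-C)\to\infty .
\]
The main obstacle is the comparison near the pole in the monotonicity step. The rest is elementary once one notices the sign of $F$ at $\psi\in\pi\mathbf Z$.
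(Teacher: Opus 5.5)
Your proof is correct and follows the paper's argument essentially step by step. You get continuity from Lemma~\ref{lem:pole}, and you identify $\psi(\theta;p,1)=\theta$ through its uniqueness part. For monotonicity you use the same first-touching argument, started by the pole expansion, with $F(\theta,v;p,k_2)-F(\theta,v;p,k_1)=(p-1)(k_2-k_1)/D_p(v)>0$, and for the limit you use the same lower bound $\psi'\ge ck-C$ on $[L/2,L]$. The only difference is how you bound $\psi(L/2)$ from below. You get $\psi(L/2)\ge 0$ from the sign of $F$ on $\pi\mathbf Z$, whereas the paper gets $\psi(L/2)\ge L/2$ for free by comparing with the $k=1$ solution via the monotonicity just proved; either bound suffices.
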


\begin{proof}
 Continuity follows from Lemma~\ref{lem:pole}.
 Fix $p>1$ and $k_2>k_1\ge1$, and let
 $d(\theta)=\psi(\theta;p,k_2)-\psi(\theta;p,k_1)$.
 By \eqref{eq:pole-expansion},
 $d(\theta)=\frac{p-1}{n-1}(k_2-k_1)\theta+O(\theta^3)>0$
 for small positive $\theta$. If $d$ had a first zero
 $\theta_*\in(0,L]$, its left derivative there would be nonpositive.
 But the two phases have the same value there, so \eqref{eq:phase} gives
 $d'(\theta_*)=(p-1)(k_2-k_1)/D_p(\psi(\theta_*;p,k_1))>0$.
 This contradiction proves strict monotonicity, including at $L$.

 For $k=1$, direct substitution gives $F(\theta,\theta;p,1)=1$ and
 $z_0(p,1)=1$. Lemma~\ref{lem:pole} therefore gives
 $\psi(\theta;p,1)=\theta$. In particular, $\Psi(p,1)=L$ and
 $\psi(\theta;p,k)\ge\theta$ for $k\ge1$.
 The value $k=1$ provides a known lower endpoint for the shooting
 argument in Section \ref{sec:reflection}.

 For fixed $p$, put $N_p(k)=(p-1)k-|n-p|-(n-2)/2$ and
 $d_p=\max\{1,p-1\}$. On $[L/2,L]=[\pi/4,\pi/2]$,
 $0\le\cot\theta\le1$ and $|\sin v\cos v|\le1/2$, so the numerator
 of $F(\theta,v;p,k)$ is at least $N_p(k)$. For sufficiently large
 $k$ this lower bound is nonnegative, and $D_p(v)\le d_p$ gives
 $\psi'(\theta;p,k)\ge N_p(k)/d_p$. Since $\psi(L/2;p,k)\ge L/2$,
 integration yields $\Psi(p,k)\ge (L/2)N_p(k)/d_p\to\infty$.
\end{proof}

\begin{lemma}[The amplitude and angular function]\label{lem:amplitude}
 For the phase in Lemma~\ref{lem:pole}, define
 \begin{equation}\label{eq:profile}
 \rho(\theta;p,k)=\exp\!\left(\int_0^\theta
 H(s,\psi(s;p,k);p,k)\,ds\right),\qquad
 f_{p,k}=\rho\cos\psi.
 \end{equation}
 The integrand extends continuously to zero with value zero.
 Then $\rho\in C^1([0,L])$ is positive, $\rho(0)=1$,
 $f_{p,k}\in C^2([0,L])$, $f_{p,k}(0)=1$, and $f_{p,k}'(0)=0$.
 The identities \eqref{eq:polar-data} hold, $f_{p,k}$ solves
 \eqref{eq:angular} on $(0,L)$, and
 \begin{equation}\label{eq:positive-Q}
 Q_{p,k}=k^2f_{p,k}^2+(f_{p,k}')^2=k^2\rho^2>0
 \qquad\text{on }[0,L].
 \end{equation}
 Moreover, $\rho$ and $f_{p,k}$ depend continuously on $(p,k)$ in
 $C^1([0,L])$ and $C^2([0,L])$, respectively.
\end{lemma}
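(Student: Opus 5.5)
The plan is to work entirely from the phase $\psi=\psi(\cdot;p,k)$ of Lemma~\ref{lem:pole} and to verify the claims in the order in which they are stated.

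\emph{The integrand at the pole.} By \eqref{eq:pole-expansion}, $\psi(\theta)=\theta z(\theta)$ with $z$ continuous and $z(0)=z_0$. The first term of the numerator of $H$ is $O(\theta)$. For the second term, $\cot\theta\sin^2\psi=\chi(\theta)\,\theta^{-2}\sin^2(\theta z)\cdot\theta z\cdot\ldots$, which is also $O(\theta)$, since $\sin^2\psi=O(\theta^2)$ and $\cot\theta=O(1/\theta)$. Hence $H(\theta,\psi(\theta))=O(\theta)\to0$. More precisely, writing the integrand with $\sigma,\chi$ as in the proof of Lemma~\ref{lem:pole}, it equals $\theta\,\widetilde H(\theta,z(\theta);\lambda)$ with $\widetilde H$ smooth. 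This shows it extends continuously by $0$, and does so uniformly for $\lambda$ in compact sets. Therefore $\rho$ is well defined, positive, satisfies $\rho(0)=1$, and is $C^1$ on $[0,L]$ with $\rho'=H\rho$.

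\emph{Regularity and the identities \eqref{eq:polar-data}.} Set $f=\rho\cos\psi$, so $f(0)=1$. Differentiating gives
\[
f'=\rho(H\cos\psi-\psi'\sin\psi).
\]
The key algebraic step is to check, directly from \eqref{eq:phase}--\eqref{eq:amplitude}, that
\[
H\cos\psi-F\sin\psi=-k\sin\psi .
\]
Multiplying through by $D_p(\psi)$, the $\cot\theta$ terms cancel: $-(n-2)\cot\theta\sin^2\psi\cos\psi+(n-2)\cot\theta\sin^2\psi\cos\psi=0$. What remains is
\[
\sin\psi\bigl[((p-2)k+n-p)\cos^2\psi-(p-1)k-(n-p)\cos^2\psi\bigr]
=\sin\psi\bigl[(p-2)k\cos^2\psi-(p-1)k\bigr].
\]
This equals $-k\sin\psi\,(\cos^2\psi+(p-1)\sin^2\psi)=-k\sin\psi\,D_p$. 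Hence $f'=-k\rho\sin\psi$, which is $C^1$, so $f\in C^2$. At the pole, $f'(0)=0$ because $\psi(0)=0$. The relation $Q=k^2\rho^2(\cos^2\psi+\sin^2\psi)=k^2\rho^2>0$ is then immediate.

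\emph{The ODE \eqref{eq:angular}.} Since $Q^{(p-2)/2}=k^{p-2}\rho^{p-2}$, the task is to show
\[
(\sin^{n-2}\theta\,\rho^{p-1}\sin\psi)'=\tfrac{\Lambda}{k}\sin^{n-2}\theta\,\rho^{p-1}\cos\psi .
\]
Expanding the left side and dividing by $\sin^{n-2}\theta\,\rho^{p-1}$ gives
\[
(n-2)\cot\theta\sin\psi+(p-1)H\sin\psi+F\cos\psi .
\]
One multiplies by $D_p$ and simplifies. The $\cot\theta$ coefficient is
\[
(n-2)\sin\psi\bigl[D_p-(p-1)\sin^2\psi-\cos^2\psi\bigr]=0 .
\]
The remaining terms are $\cos\psi$ times
\[
(p-1)((p-2)k+n-p)\sin^2\psi+(p-1)k+(n-p)\cos^2\psi .
\]
This should equal $((p-1)k+n-p)D_p$, which is a direct polynomial comparison in $\cos^2\psi$ and $\sin^2\psi$:
\begin{itemize}
\item the $\cos^2$ coefficient is $(p-1)k+n-p$ on both sides;
\item for the $\sin^2$ coefficient, the left side is $(p-1)(p-2)k+(p-1)(n-p)+(p-1)k=(p-1)^2k+(p-1)(n-p)$, which matches the right side.
\end{itemize}
Since $\Lambda/k=(p-1)k+n-p$, this proves \eqref{eq:angular} on $(0,L)$. (If the Appendix records this derivation, I would simply cite it and present the computation as the converse check.)

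\emph{Continuity in $(p,k)$.} Lemma~\ref{lem:pole} gives $\lambda\mapsto\psi_\lambda$ continuous into $C^1$. I expect this step to be the main obstacle, because $H$ is singular at $\theta=0$ and uniformity in $\lambda$ is needed there. I would split the integral at $\delta$:
\begin{itemize}
\item On $[0,\delta]$, use the representation $\theta\widetilde H(\theta,z_\lambda(\theta);\lambda)$ together with the continuity of $\lambda\mapsto z_\lambda$ in $C([0,\delta])$ from the proof of Lemma~\ref{lem:pole}, and the uniform continuity of $\widetilde H$ on the compact set $E$.
\item On $[\delta,L]$, $H$ is smooth and bounded, so the integrand converges uniformly.
\end{itemize}
Thus $\int_0^\theta H$ converges uniformly, and so do $\rho$ and $\rho'=H\rho$. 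Finally, $f=\rho\cos\psi$, $f'=-k\rho\sin\psi$, and $f''=-k(\rho'\sin\psi+\rho\psi'\cos\psi)$ all depend continuously on $\lambda$ in sup norm, giving continuity of $f$ in $C^2([0,L])$.
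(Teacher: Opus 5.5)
Your proof is correct and follows the paper's argument. You use the same factorization $H(\theta,\theta\xi;\lambda)=\theta J(\theta,\xi;\lambda)$ at the pole, and the same identity $H\cos\psi-F\sin\psi=-k\sin\psi$ to get $f'=-k\rho\sin\psi$ and $Q=k^2\rho^2$; you verify the angular equation by direct expansion, where the paper cites the converse computation in the Appendix. The one variation is in the parameter continuity: the paper avoids your split at $\delta$ by extending $z_\lambda(\theta)=\int_0^1\psi_\lambda'(t\theta)\,dt$ to all of $[0,L]$, so that $\|z_\lambda-z_\mu\|_{C([0,L])}\le\|\psi_\lambda'-\psi_\mu'\|_{C([0,L])}$ follows from the statement of Lemma~\ref{lem:pole} alone. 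Two small slips: on $[\delta,L]$ the property you need is uniform continuity of $H$ (available by $\pi$-periodicity in $v$), not boundedness; and your intermediate formula should read $\cot\theta\sin^2\psi=\theta\chi(\theta)z^2\sigma(\theta z)^2$.
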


\begin{proof}
 Use $\lambda=(p,k)$ and the functions $\sigma$, $\chi$, and $z_\lambda$
 from the proof of Lemma~\ref{lem:pole}. The smooth function
 \[
 J(\theta,\xi;\lambda)=
 \frac{((p-2)k+n-p)\xi\sigma(\theta\xi)\cos(\theta\xi)
 -(n-2)\chi(\theta)\xi^2\sigma(\theta\xi)^2}{D_p(\theta\xi)}
 \]
 satisfies $H(\theta,\theta\xi;\lambda)=\theta J(\theta,\xi;\lambda)$
 for $\theta>0$. Thus $h_\lambda(\theta)=H(\theta,\psi_\lambda(\theta);\lambda)$
 extends to zero by $h_\lambda(0)=0$, since near zero it equals
 $\theta J(\theta,z_\lambda(\theta);\lambda)=O(\theta)$.
 Extend $z_\lambda$ to $[0,L]$ by
\[
z_\lambda(\theta)=\int_0^1\psi_\lambda'(t\theta)\,dt.
\]
Since $\psi_\lambda(0)=0$ and $\psi_\lambda'(0)=z_0(\lambda)$,
this agrees with $\psi_\lambda(\theta)/\theta$ for $\theta>0$
and with $z_0(\lambda)$ at zero. Moreover,
\[
\|z_\lambda-z_\mu\|_{C([0,L])}
\le \|\psi_\lambda'-\psi_\mu'\|_{C([0,L])},
\]
so Lemma~\ref{lem:pole} gives continuity of
$\lambda\mapsto z_\lambda$ into $C([0,L])$. The identity
\[
h_\lambda(\theta)
=\theta J(\theta,z_\lambda(\theta);\lambda),
\qquad 0\le\theta\le L,
\]
and uniform continuity of $J$ on compact sets therefore imply
that $\lambda\mapsto h_\lambda$ is continuous into $C([0,L])$.
 Formula \eqref{eq:profile} now gives $\rho>0$, $\rho(0)=1$, and
 $\rho'=h_\lambda\rho$, with continuous parameter dependence in
 $C^1([0,L])$. The coefficient formulas give
 $$H(\theta,v;\lambda)\cos v-F(\theta,v;\lambda)\sin v=-k\sin v.$$
 Differentiating $f_{p,k}=\rho\cos\psi$ therefore proves
 $f_{p,k}'=-k\rho\sin\psi$, first for $\theta>0$ and then at zero
 by continuity. The right side belongs to $C^1([0,L])$, and hence
 $f_{p,k}''=-k\rho'\sin\psi-k\rho\cos\psi\,\psi'$.
 In particular, $f_{p,k}''(0)=-kz_0(p,k)$. The formulas for
 $f_{p,k}$ and its first two derivatives give its continuous parameter
 dependence in $C^2([0,L])$. Finally, \eqref{eq:positive-Q} follows
 from \eqref{eq:polar-data}, and the converse computation in
 Appendix proves \eqref{eq:angular} on $(0,L)$.
\end{proof}

\section{The reflection and the entire solution}\label{sec:reflection}
We are now ready to prove the main theorem.
\begin{proof}[Proof of Theorem~\ref{thm:entire}]
	By Lemma~\ref{lem:shooting}, the continuous map $k\mapsto\Psi(p,k)$
 starts at $\Psi(p,1)=\pi/2<\pi$ and tends to infinity.
 The intermediate value theorem gives a root, and strict monotonicity
 makes it unique. Thus there is a unique $k>1$ with
	\begin{equation}\label{eq:root}
		\Psi(p,k)=\pi.
	\end{equation}
	Set $L=\pi/2$ and take $\rho$ and $f=f_{p,k}$ from Lemma~\ref{lem:amplitude}.
	Then $\rho>0$, $f\in C^2([0,L])$, $f(0)=1$, and $f'(L)=0$
	by \eqref{eq:polar-data}.
	
	Extend $f$ to $[0,\pi]$ by
	\begin{equation}\label{eq:reflection}
		f(\pi-\theta)=f(\theta).
	\end{equation}
	Since $f'(L)=0$, the extension is $C^2$ across $L$.
	Equation \eqref{eq:angular} is invariant under this reflection, so it
	holds on $(0,\pi)$. Also, $Q$ stays positive by
	\eqref{eq:positive-Q} and reflection.
	
	We verify the regularity at the poles using an elementary fact:
	if $h\in C^2([0,a))$ and $h'(0)=0$, then $y\mapsto h(|y|)$ is
	$C^2$ near zero. Indeed, for $s=|y|>0$,
	\begin{equation}\label{eq:radial-hessian}
		D_y^2(h(|y|))=\frac{h'(s)}s I+
		\left(h''(s)-\frac{h'(s)}s\right)\frac{y\otimes y}{s^2}.
	\end{equation}
	Since $h'(s)/s\to h''(0)$, this Hessian tends to $h''(0)I$.
	Also $\nabla(h(|y|))=h''(0)y+o(|y|)$, which proves
	differentiability of the gradient at zero.
	Near the north pole, use the chart
	$\omega=(y,\sqrt{1-|y|^2})$, with $y\in\R^{n-1}$ small,
	and apply this fact to $h(s)=f(\arcsin s)$.
	This also applies when $n=2$, and hence
	$\omega\mapsto f(\arccos\omega_n)$ is $C^2$ on $\Sph$.

	By compactness there are constants $0<m\le M<\infty$ with
	$m\le Q\le M$ on $[0,\pi]$. The gradient formula from
	Appendix gives
	\begin{equation}\label{eq:gradient}
		\nabla u(r\omega)=r^{k-1}\big(kf(\theta)\omega+f'(\theta)e_\theta\big),
		\qquad |\nabla u(r\omega)|=r^{k-1}\sqrt{Q(\theta)}.
	\end{equation}
	Here $\theta=\arccos\omega_n$, and $e_\theta$ is the unit tangent
	in the direction of increasing $\theta$ away from the poles; the term
	$f'(\theta)e_\theta$ extends continuously by zero at the poles.
	Thus $u\in C^2(\R^n\setminus\{0\})$ and it has no critical point
	there. The equation holds classically away from the axis by
	\eqref{eq:angular}. Since the gradient is nonzero everywhere off zero,
	the flux $\mathcal A=|\nabla u|^{p-2}\nabla u$ is $C^1$ there.
	Its divergence is continuous and vanishes on the complement of the
	axis, a dense set. It therefore vanishes throughout
	$\R^n\setminus\{0\}$. In particular, the equation holds weakly on
	that punctured space.
	
	The estimates $|u(x)|\le C|x|^k$ and
	$|\nabla u(x)|\le C|x|^{k-1}$, with $k>1$, show that the extension
	$u(0)=0$ belongs to $C^1(\R^n)$, with $\nabla u(0)=0$.
	In particular, $u\in W^{1,p}_{\mathrm{loc}}(\R^n)$.
	
	To remove zero from the weak equation, take
	$\varphi\in C_0^\infty(\R^n)$ and integrate by parts outside
	$B_\varepsilon$. Since $\diver\mathcal A=0$ there it follows that
	\[
	\left|\int_{\R^n\setminus B_\varepsilon}
	\langle\mathcal A,\nabla\varphi\rangle\dd x\right|
	\le \|\varphi\|_\infty\int_{\partial B_\varepsilon}|\mathcal A|\dd S
	\le C\varepsilon^{n-1+(k-1)(p-1)}\longrightarrow0.
	\]
	Local integrability of $\mathcal A$ lets us pass to the limit,
	proving \eqref{eq:weak} and completing the construction.
\end{proof}
\begin{remark}
For each fixed $n\ge2$ and $p>1$, the same construction gives
infinitely many entire $p$-harmonic functions whose only critical
point is the origin. Indeed, choosing $m\pi$, $m \in \mathbf N \setminus{\{0\}}$, in \eqref{eq:root} we may apply the same
reflection argument. These choices give distinct homogeneity
exponents, so the resulting solutions are not scalar multiples
of one another. In the following, we use only the solution
corresponding to phase $\pi$ at the equator.
\end{remark}

\section{Failure of the $C^1$-regularity of the natural gradient}

In this section we derive Corollary \ref{thm:counterexample} using Theorem \ref{thm:entire}. The shooting root \eqref{eq:root} determines a function $k_n(p)$.

\begin{lemma}\label{lem:branch}
	The root $k_n(p)>1$ is unique and continuous on $(1,\infty)$ with
	$k_n(2)=2$. Near $p=2$, multiply the angular factors from
	Theorem~\ref{thm:entire} by $n-1$ and call them $f_p$. Then
	\begin{equation}\label{eq:quadratic}
		f_2(\theta)=n\cos^2\theta-1,
		\qquad f_p\longrightarrow f_2\quad\hbox{in }C^2([0,\pi]),
		\quad\text{ as } p\to2.
	\end{equation}
\end{lemma}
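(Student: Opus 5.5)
Uniqueness of the root $k_n(p)$ comes from the proof of Theorem~\ref{thm:entire}. For each $p>1$, Lemma~\ref{lem:shooting} shows that $k\mapsto\Psi(p,k)$ is continuous, strictly increasing on $[1,\infty)$, equals $\pi/2$ at $k=1$ and tends to infinity. Hence the equation $\Psi(p,k)=\pi$ has exactly one solution $k_n(p)>1$.

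For continuity, I will use the joint continuity of $\Psi$ from Lemma~\ref{lem:pole} together with strict monotonicity in $k$. Fix $p_0$ and $\varepsilon>0$ with $k_n(p_0)-\varepsilon\ge1$. Monotonicity gives
\[
\Psi(p_0,k_n(p_0)-\varepsilon)<\pi<\Psi(p_0,k_n(p_0)+\varepsilon).
\]
Both strict inequalities persist for $p$ near $p_0$ by continuity in $p$. The intermediate value theorem and uniqueness then place $k_n(p)$ in $(k_n(p_0)-\varepsilon,k_n(p_0)+\varepsilon)$.

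For $k_n(2)=2$ I will exhibit the solution explicitly.
\begin{itemize}
\item At $p=2$ we have $D_2\equiv1$, and \eqref{eq:angular} is the linear Legendre-type equation with $\Lambda=k(k+n-2)$.
\item Take $g(\theta)=(n\cos^2\theta-1)/(n-1)$, so $g(0)=1$. This is the zonal harmonic: $|x|^2g(\theta)=(nx_n^2-|x|^2)/(n-1)$ is a harmonic polynomial of degree $2$, so $g$ solves \eqref{eq:angular} with $k=2$.
\item Write $g=\rho\cos\psi$ and $g'=-2\rho\sin\psi$ with $\rho=\sqrt{g^2+(g')^2/4}>0$, choosing the continuous branch of $\psi$ with $\psi(0)=0$.
\item The forward computation in the Appendix shows that $\psi$ solves \eqref{eq:phase} with parameters $(2,2)$. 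Near $0$, $g'=-\tfrac{2n}{n-1}\theta+O(\theta^3)$, which gives $\psi=\tfrac{n}{n-1}\theta+O(\theta^3)$. This matches $z_0(2,2)=(2+n-2)/(n-1)$.
\item The uniqueness part of Lemma~\ref{lem:pole} identifies $\psi$ with $\psi(\cdot;2,2)$.
\end{itemize}

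Next I track the phase to the equator. On $(0,\pi/2)$ the function $g'=-\tfrac{n}{n-1}\sin2\theta$ is negative, so $\sin\psi>0$ there and $\psi\in(0,\pi)$. At $\theta=L$ we have $g'=0$ and $g(L)=-1/(n-1)<0$. Hence $\cos\psi(L)<0$ and $\sin\psi(L)=0$, which forces $\psi(L)=\pi$. Thus $\Psi(2,2)=\pi$ and $k_n(2)=2$. Reflection reproduces $g$ on $[0,\pi]$, since $g$ is even about $\pi/2$.

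For the $C^2$ convergence, Lemma~\ref{lem:amplitude} gives that $(p,k)\mapsto f_{p,k}$ is continuous into $C^2([0,L])$. Composing with the continuous map $p\mapsto(p,k_n(p))$ gives $f_{p,k_n(p)}\to f_{2,2}=g$ in $C^2([0,L])$. The reflection $\theta\mapsto\pi-\theta$ is an isometry in $C^2$ and glues $C^2$ functions, so the convergence holds on $[0,\pi]$. Multiplying by $n-1$ yields \eqref{eq:quadratic}.

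The main obstacle is the phase-branch bookkeeping at $p=2$: one has to show that the explicit polynomial corresponds to the Lemma~\ref{lem:pole} phase and that its endpoint value is exactly $\pi$ rather than another multiple. The sign argument on $g'$ over $(0,\pi/2)$ settles this.
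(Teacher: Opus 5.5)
Your proposal is correct and follows the paper's proof essentially step for step. The steps match: uniqueness and continuity of $k_n$ come from the strictly monotone, jointly continuous shooting map $\Psi$; the phase of the zonal harmonic $(n\cos^2\theta-1)/(n-1)$ is identified with $\psi(\cdot\,;2,2)$ through the expansion $\frac{n}{n-1}\theta+O(\theta^3)$ and the uniqueness in Lemma~\ref{lem:pole}; the sign of $g'$ on $(0,\pi/2)$ pins down $\Psi(2,2)=\pi$; and $C^2$ convergence follows from Lemma~\ref{lem:amplitude} plus reflection. The paper additionally notes, via the pole chart and \eqref{eq:radial-hessian}, that the convergence also holds for the corresponding functions on $\Sph$; this is used later in Proposition~\ref{prop:variation} but goes beyond what the statement itself asserts.
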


\begin{proof}
	Existence and uniqueness were proved at the start of
	Section~\ref{sec:reflection} using Lemma~\ref{lem:shooting}.
	To prove continuity at $p_0>1$, put $k_0=k_n(p_0)$ and choose
	$\varepsilon>0$ with $k_0-\varepsilon>1$. Strict monotonicity gives
	$\Psi(p_0,k_0-\varepsilon)<\pi<\Psi(p_0,k_0+\varepsilon)$.
	By continuity of $\Psi$, these inequalities persist for $p$ close to $p_0$,
	so $k_0-\varepsilon<k_n(p)<k_0+\varepsilon$.

	For $p=2$ and $k=2$, consider the normalized angular function
	$f(\theta)=\frac{n\cos^2\theta-1}{n-1}$.
	Indeed, $r^2(n\cos^2\theta-1)=nx_n^2-|x|^2$ is harmonic and
	on $(0,\pi)$ the pair $(f,-f'/2)$ never vanishes.
	Since $f'<0$ on $(0,\pi/2)$ and $f(\pi/2)<0$, its continuous phase
	starting at zero reaches $\pi$ at the equator. Moreover, one has
	$\psi(\theta)=n\theta/(n-1)+O(\theta^3)=z_0(2,2)\theta+O(\theta^3)$
	so that the Lemma~\ref{lem:pole} identifies it with the constructed solution.

	This gives $\Psi(2,2)=\pi$ and $k_n(2)=2$. Finally, the claimed
	convergence in $C^2([0,\pi])$ follows from
	Lemma~\ref{lem:amplitude} and the reflection in
	Section~\ref{sec:reflection}. It also holds for the corresponding
	functions on $\Sph$. Indeed, in a pole chart write
	$h_p(s)=f_p(\arcsin s)$.
	Since $h_p'(0)=h_2'(0)=0$ it follows that
	$$\sup_{s>0}\frac{|h_p'(s)-h_2'(s)|}{s}
	\le\|h_p''-h_2''\|_\infty$$
	on a fixed small interval. The radial Hessian formula
	\eqref{eq:radial-hessian} therefore gives convergence in $C^2$
	at the poles as well as away from them.
\end{proof}

\begin{proposition}\label{prop:variation}
	The shooting root is differentiable at $p=2$, and
	\begin{equation}\label{eq:cn}
		k_n^\prime(2)=-\avgint_{\Sph}
		\frac{(n\omega_n^2-1)^2}{1+n(n-2)\omega_n^2}\dd S,
	\end{equation}
    where $\omega \in \Sph$.
	In particular, $k_2^\prime(2)=-1/2$ and
	$-1/2<k_n^\prime(2)<0$ for every $n\ge3$.
\end{proposition}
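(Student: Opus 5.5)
The plan is to avoid proving differentiable dependence of the phase on $(p,k)$. Instead I would derive an exact integral identity linking the difference quotient $(k_n(p)-2)/(p-2)$ to quantities whose convergence as $p\to2$ is already given by Lemma~\ref{lem:branch}. Write $s(\theta)=\sin^{n-2}\theta$, let $f_p$ be the normalized angular factors of Lemma~\ref{lem:branch}, and put $Q_p=k_n(p)^2f_p^2+(f_p')^2$, $w_p=s\,Q_p^{(p-2)/2}$ and $\Lambda_p=\Lambda(p,k_n(p))$. Equation \eqref{eq:angular} reads $(w_pf_p')'+\Lambda_pw_pf_p=0$ on $(0,\pi)$. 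Since $w_p$ is homogeneous of degree zero in $f_p$, the normalization by $n-1$ does not affect it. At $p=2$ we have $w_2=s$ and $\Lambda_2=\Lambda(2,2)=2n$.

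First I multiply the equation for $f_p$ by $f_2$ and the equation for $f_2$ by $f_p$, integrate over $(0,\pi)$ and integrate by parts. The boundary terms vanish because $s(0)=s(\pi)=0$ when $n\ge3$, and $f_p'(0)=f_p'(\pi)=0$ in all cases. Subtracting the two resulting identities gives
\[
\int_0^\pi (s-w_p)f_p'f_2'\,d\theta+\Lambda_p\int_0^\pi w_pf_pf_2\,d\theta-2n\int_0^\pi s f_pf_2\,d\theta=0 .
\]
Next I write
\[
\Lambda_p-2n=\bigl[\Lambda(p,k_n(p))-\Lambda(2,k_n(p))\bigr]+\bigl[\Lambda(2,k_n(p))-\Lambda(2,2)\bigr].
\]
The second bracket equals $(k_n(p)-2)(2k_n(p)+n-2)$, which is exactly linear in $k_n(p)-2$. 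I also expand
\[
w_p-s=s\,\bigl(Q_p^{(p-2)/2}-1\bigr)=\tfrac{p-2}{2}\,s\log Q_p\,(1+o(1))
\]
uniformly, which holds because $Q_p$ is bounded above and below uniformly near $p=2$ by Lemma~\ref{lem:branch}. Dividing the identity by $p-2$, the only unknown term is
\[
\frac{k_n(p)-2}{p-2}\,(2k_n(p)+n-2)\int_0^\pi s f_pf_2\,d\theta .
\]
Every other term converges by the $C^2$ convergence $f_p\to f_2$ and the continuity of $k_n$ from Lemma~\ref{lem:branch}. The coefficient tends to $(n+2)\int_0^\pi s f_2^2\,d\theta\neq0$. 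Hence the difference quotient converges, which proves differentiability, and
\[
(n+2)\,k_n'(2)\int_0^\pi s f_2^2\,d\theta
=\tfrac12\int_0^\pi s\log Q_2\,(f_2')^2\,d\theta-n\int_0^\pi s\log Q_2\,f_2^2\,d\theta-2\int_0^\pi sf_2^2\,d\theta ,
\]
where $\partial_p\Lambda(2,2)=k(k-1)=2$ was used.

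The step I expect to be the main obstacle is converting this expression into \eqref{eq:cn}. The key observation is that for $f_2=n\cos^2\theta-1$ one has
\[
Q_2=4\bigl(1+n(n-2)\cos^2\theta\bigr),
\]
which is precisely the denominator in \eqref{eq:cn}. So the $\log Q_2$ terms must be integrated by parts until only $(\log Q_2)'=Q_2'/Q_2$ remains. To do this I would look for an antiderivative relation: using the harmonic equation $(sf_2')'+2nsf_2=0$, I would try to write $\tfrac12 s(f_2')^2-nsf_2^2$ (possibly after adding a multiple of $sf_2^2$) as the derivative of an explicit boundary-vanishing expression, for instance $\tfrac12 sf_2f_2'$ plus a polynomial in $\cos\theta$ times $s\sin\theta$. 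One integration by parts then leaves an integral of $Q_2'/Q_2$ against a polynomial in $\cos\theta$. Simplifying this, and combining it with the $-2\int sf_2^2$ term, should yield $-\int s\,f_2^2/(1+n(n-2)\cos^2\theta)$ up to the normalizing factor. Dividing by $\int sf_2^2$ then gives an average over $\Sph$, because $d S$ restricted to axially symmetric functions is proportional to $s\,d\theta$. If this route gets messy, I would fall back on substituting $c=\cos\theta$ and computing all integrals as one-variable rational and logarithmic integrals in $c$. The constant factors must be checked carefully against the normalization of $f_2$, since the formula as stated is an average of $(n\omega_n^2-1)^2$ divided by $1+n(n-2)\omega_n^2$, not a ratio of two integrals.

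Finally, for the bounds, I would use the identity
\[
\frac{(nc^2-1)^2}{1+n(n-2)c^2}=1-\frac{n^2c^2(1-c^2)}{1+n(n-2)c^2},
\]
which shows that the integrand lies in $[0,1]$ and is not identically zero, so $k_n'(2)<0$. For $n=2$ the denominator is $1$, and the average of $(2\cos^2\theta-1)^2=\cos^22\theta$ over the circle is $1/2$. For $n\ge3$, the bound $k_n'(2)>-1/2$ requires showing that the average is below $1/2$. I would do this by computing the $c$-integral with weight $(1-c^2)^{(n-3)/2}$ explicitly, or by comparing it with the $n=2$ value. This inequality is the one remaining point that needs genuine estimation rather than algebra.
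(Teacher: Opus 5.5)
Your proof of differentiability and of the limit formula is sound, and it takes a genuinely different route from the paper. The paper uses the nondivergence form $\Delta_{\Sph}f_p+k(k+n-2)f_p+(p-2)\Delta_\infty^N u_p=0$. There the factor $p-2$ comes out exactly and the limit is $-\avgint_{\Sph}(\Delta_\infty^N u_2)f_2\dd S$, but this requires uniform convergence of Hessians on $\Sph$, including at the poles. Your weighted Sturm--Liouville identity on $(0,\pi)$ needs only $C^1$ convergence of $f_p$ and a uniform lower bound on $Q_p$. The price is expanding $Q_p^{(p-2)/2}$ and removing $\log Q_2$ afterwards.

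There are two harmless slips:
\begin{itemize}
\item $\Lambda(2,k)-\Lambda(2,2)=(k-2)(k+n)$, not $(k-2)(2k+n-2)$; both factors tend to $n+2$.
\item Under $f_p\mapsto cf_p$ the weight $w_p$ scales by $c^{p-2}$, not $c^0$; this is irrelevant because the equation is homogeneous.
\end{itemize}

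The step you call the main obstacle is short. Exactly $(\tfrac12 sf_2f_2')'=\tfrac12 s(f_2')^2-nsf_2^2$, so the two $\log Q_2$ terms equal $-\tfrac12\int_0^\pi sf_2f_2'\,(Q_2'/Q_2)\,d\theta$. With $D=1+n(n-2)\cos^2\theta=1+n(n-2)\omega_n^2$ one checks pointwise that $-\tfrac12 f_2f_2'Q_2'/Q_2-2f_2^2=-2(n-1)f_2^2/D-2(n-2)f_2$. Together with $\int_0^\pi sf_2\,d\theta=0$ and $\avgint_{\Sph}f_2^2\dd S=2(n-1)/(n+2)$, this gives \eqref{eq:cn}. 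Indeed $\tfrac12 f_2'Q_2'/Q_2+2f_2=\Delta_\infty^N u_2$ on the unit sphere, so this is the paper's formula in disguise.

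The genuine gap is the bound $k_n'(2)>-1/2$ for $n\ge3$. This is the quantitative heart of the proposition, since it is exactly what gives $\beta_n(p)<1$ for $p<2$ in Corollary~\ref{thm:counterexample}. You do not prove it, and none of your suggested routes works as stated:
\begin{itemize}
\item The trivial bound $D\ge1$ only gives $\avgint_{\Sph}f_2^2/D\dd S\le 2(n-1)/(n+2)$, which is at least $1/2$.
\item Comparison with the planar value has no visible monotonicity in $n$, because both the integrand and the weight $(1-c^2)^{(n-3)/2}$ change with $n$.
\item Explicit evaluation is possible for each fixed $n$, e.g. $k_3'(2)=2-4\pi/(3\sqrt3)\approx-0.418$. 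But it yields parity-dependent closed forms, and comparing them with $1/2$ uniformly in $n$ is itself an estimate you have not supplied.
\end{itemize}

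The missing idea, as in the paper, is the spherical integration-by-parts identity $\avgint_{\Sph}f_2\,g(\omega_n^2)\dd S=2\avgint_{\Sph}\omega_n^2(1-\omega_n^2)g'(\omega_n^2)\dd S$. Apply it first with $g(s)=(ns-1)/(1+n(n-2)s)$, together with $1-f_2^2/D=n^2\omega_n^2(1-\omega_n^2)/D$. This reduces $1+2k_n'(2)$ to $n(n-2)\bigl(n^2\avgint_{\Sph}\omega_n^2R\dd S-\avgint_{\Sph}R\dd S\bigr)$, where $R=\omega_n^2(1-\omega_n^2)/D^2$. Then apply it with $g(s)=s(1-s)/(1+n(n-2)s)^2$ and use the elementary inequality $4n(n-2)\omega_n^2\le D^2$. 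This gives $\avgint_{\Sph}\omega_n^2R\dd S\ge\frac{2}{n^2-n+4}\avgint_{\Sph}R\dd S>\frac{1}{n^2}\avgint_{\Sph}R\dd S$, which is the required positivity for $n\ge3$.
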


\begin{proof}
 Use the normalization of Lemma~\ref{lem:branch} and put
 $u_p(r\omega)=r^{k_n(p)}f_p(\omega)$.
 The normalized Euclidean infinity Laplacian of $u_p$ at
 $\omega\in\Sph$ is
 \[
 \Delta_\infty^N u_p(\omega)
 :=\frac{\langle D^2u_p(\omega)\,\nabla u_p(\omega),
                  \nabla u_p(\omega)\rangle}
          {|\nabla u_p(\omega)|^2}.
 \]
 Then we have
 \[
 \Delta_{\Sph}f_p+k_n(p)(k_n(p)+n-2)f_p
 +(p-2)\Delta_\infty^N u_p=0
 \qquad\text{on }\Sph.
 \]
 Multiplying the equation by $f_2$ and integrating by parts using
 $\Delta_{\Sph}f_2=-2n f_2$ yields
 \[
 (k_n(p)-2)(k_n(p)+n)\avgint_{\Sph}f_p f_2\dd S
 =-(p-2)\avgint_{\Sph}(\Delta_\infty^N u_p) f_2\dd S.
 \]
 For $p\ne2$ sufficiently close to $2$ we therefore can write
 the difference quotient as
 \begin{equation}\label{eq:projected}
 \frac{k_n(p)-k_n(2)}{p-2}=\frac{k_n(p)-2}{p-2}
 =-\frac{\avgint_{\Sph}(\Delta_\infty^N u_p) f_2\dd S}
 {(k_n(p)+n)\avgint_{\Sph}f_p f_2\dd S}.
 \end{equation}
  By Lemma~\ref{lem:branch} and the homogeneity of $u_p$,
 the gradients and Hessians of $u_p$ converge uniformly on $\Sph$
 to those of $u_2(x)=nx_n^2-|x|^2$.
 Since $|\nabla u_2|$ is bounded away from zero on $\Sph$,
 we obtain $\Delta_\infty^N u_p\to\Delta_\infty^N u_2$ uniformly there.
 Thus the numerator in \eqref{eq:projected} tends to
 $\avgint_{\Sph}(\Delta_\infty^N u_2) f_2\dd S$, while the denominator
 tends to $(n+2)\avgint_{\Sph}f_2^2\dd S>0$
 and hence $k_n^\prime(2)$ exists and
 \begin{equation}\label{eq:root deriv formula}
    k_n^\prime(2) = -\frac{\avgint_{\Sph}(\Delta_\infty^N u_2) f_2\dd S}
 {(k_n(2)+n)\avgint_{\Sph}f_2^2\dd S}
 \end{equation}
 Direct differentiation of $u_2$ gives
 \[
 \begin{aligned}
 \Delta_\infty^N u_2(\omega)
 &=2\frac{(n-1)^3\omega_n^2-(1-\omega_n^2)}
          {1+n(n-2)\omega_n^2}\\
 &=2(n-2)+2(n-1)\frac{f_2}{1+n(n-2)\omega_n^2}.
 \end{aligned}
 \]
 
 To evaluate the integrals, symmetry and $\sum_{i=1}^n\omega_i^2=1$
 first give $\avgint_{\Sph}\omega_n^2\dd S=1/n$.
 For brevity, let $a=\avgint_{\Sph}\omega_1^4\dd S$ and
 $b=\avgint_{\Sph}\omega_1^2\omega_2^2\dd S$.
 Rotational invariance gives
 \[
 a=\avgint_{\Sph}\bigl((\omega_1+\omega_2)/\sqrt2\bigr)^4\dd S
 =a/2+3b/2,
 \]
 since the terms odd in either coordinate have zero average
 and thus $a=3b$. Averaging $(\sum_{i=1}^n\omega_i^2)^2=1$ yields
 $na+n(n-1)b=1$, so $a=3/(n(n+2))$.
 Since $f_2=n\omega_n^2-1$, we obtain $\avgint_{\Sph}f_2\dd S=0$ and $\avgint_{\Sph}\omega_n^4\dd S=\frac{3}{n(n+2)}$, we obtain
 \begin{equation}\label{eq:moments}
 \avgint_{\Sph}f_2^2\dd S
 =n^2\avgint_{\Sph}\omega_n^4\dd S
   -2n\avgint_{\Sph}\omega_n^2\dd S+1
 =\frac{2(n-1)}{n+2}.
 \end{equation}
 
 Substituting these to \eqref{eq:root deriv formula} yields \eqref{eq:cn}.
 The integrand in \eqref{eq:cn} is nonnegative and is positive on a set
 of positive surface measure, so $k_n^\prime(2)<0$.
 For $n=2$, the denominator in \eqref{eq:cn} equals one,
 and $\avgint_{\Sph}f_2^2\dd S=1/2$, giving $k_2^\prime(2)=-1/2$.
 It remains to prove $k_n^\prime(2)>-1/2$ when $n\ge3$.
 
 The identities $\Delta_{\Sph}(\omega_n^2)=-2f_2$ and
 $|\nabla_{\Sph}(\omega_n^2)|^2=4\omega_n^2(1-\omega_n^2)$
 give, by integration by parts,
 \begin{equation}\label{eq:spherical-ibp}
 \avgint_{\Sph}f_2 g(\omega_n^2)\dd S
 =2\avgint_{\Sph}\omega_n^2(1-\omega_n^2)g'(\omega_n^2)\dd S
 \end{equation}
 for every smooth $g$ on $[0,1]$.
 Recall that
\[
-k_n^\prime(2)
=\avgint_{\Sph}\frac{f_2^2}{1+n(n-2)\omega_n^2}\dd S,
\]
and put
\[
R(\omega)=\frac{\omega_n^2(1-\omega_n^2)}
                {(1+n(n-2)\omega_n^2)^2}.
\]

First, apply \eqref{eq:spherical-ibp} to
$g(s)=(ns-1)/(1+n(n-2)s)$.
Since $f_2=n\omega_n^2-1$ and
$g'(s)=n(n-1)/(1+n(n-2)s)^2$, this gives
$-k_n^\prime(2)=2n(n-1)\avgint_{\Sph}R\dd S$.
Next, the identity
\[
1-\frac{f_2^2}{1+n(n-2)\omega_n^2}
=\frac{n^2\omega_n^2(1-\omega_n^2)}{1+n(n-2)\omega_n^2}
\]
and the definition of $R$ give
\begin{align*}
1+k_n^\prime(2)
&=\avgint_{\Sph}\left(1-\frac{f_2^2}{1+n(n-2)\omega_n^2}\right)\dd S\\
&=n^2\avgint_{\Sph}\bigl(1+n(n-2)\omega_n^2\bigr)R\dd S\\
&=n^2\avgint_{\Sph}R\dd S
  +n^3(n-2)\avgint_{\Sph}\omega_n^2R\dd S.
\end{align*}
Subtracting the preceding formula for $-k_n^\prime(2)$
from this formula for $1+k_n^\prime(2)$ now yields
\begin{align*}
1+2k_n^\prime(2)
&=\bigl(n^2-2n(n-1)\bigr)\avgint_{\Sph}R\dd S
  +n^3(n-2)\avgint_{\Sph}\omega_n^2R\dd S\\
&=n(n-2)\left(n^2\avgint_{\Sph}\omega_n^2R\dd S
                -\avgint_{\Sph}R\dd S\right).
\end{align*}
A second use of \eqref{eq:spherical-ibp}, now with
$g(s)=s(1-s)/(1+n(n-2)s)^2$, gives
\[
3\avgint_{\Sph}R\dd S-(n+4)\avgint_{\Sph}\omega_n^2R\dd S
=4n(n-2)\avgint_{\Sph}
\frac{\omega_n^2(1-\omega_n^2)R}{1+n(n-2)\omega_n^2}\dd S.
\]
Since
$4n(n-2)\omega_n^2/(1+n(n-2)\omega_n^2)
\le1+n(n-2)\omega_n^2$
is equivalent to $(n(n-2)\omega_n^2-1)^2\ge0$, and
$0\le1-\omega_n^2\le1$, the right side is at most
$\avgint_{\Sph}R\dd S+n(n-2)\avgint_{\Sph}\omega_n^2R\dd S$.
Therefore, for $n\ge3$,
\[
\avgint_{\Sph}\omega_n^2R\dd S
\ge\frac{2}{n^2-n+4}\avgint_{\Sph}R\dd S
>\frac{1}{n^2}\avgint_{\Sph}R\dd S.
\]
Here $\avgint_{\Sph}R\dd S>0$, since $R>0$ when $0<\omega_n^2<1$,
and $2n^2>n^2-n+4$ for $n\ge3$.
Thus $1+2k_n^\prime(2)>0$, proving the remaining bound.
\par
\end{proof}

\begin{proof}[Proof of Corollary~\ref{thm:counterexample}]
	Fix $n\ge3$ and use the solutions on the branch $k_n(p)$.
	Since $k_n$ is differentiable at $2$ one has
	\begin{equation}\label{eq:k-beta-expansion}
		\begin{split}
			k_n(p)&=2+k_n^\prime(2)(p-2)+o(|p-2|),
			\quad \text{and}\\
			\beta_n(p):=\frac p2(k_n(p)-1)
			&=1+\left(\frac12+k_n^\prime(2)\right)(p-2)
			+o(|p-2|).
		\end{split}
	\end{equation}
	Proposition~\ref{prop:variation} gives $k_n^\prime(2)<0$ and
	$1/2+k_n^\prime(2)>0$.
	Thus, after choosing $\delta_n\in(0,1)$ small enough, we have
	$k_n(p)>2, 0<\beta_n(p)<1$ and $2-\delta_n<p<2$.

	Choose the pole normalization $f_p(0)=1$ in the solution
	$u_p(r\omega)=r^{k_n(p)}f_p(\theta)$, where
	$\theta=\arccos\omega_n$,
	and write $Q_p=k_n(p)^2f_p^2+(f_p')^2$ and
	$W_p=V_p(\nabla u_p)$.
	By \eqref{eq:gradient},
	\begin{equation}\label{eq:V-angular}
		|W_p(r\omega)|=r^{\beta_n(p)}Q_p(\omega)^{p/4}.
	\end{equation}
	Positivity and continuity of $Q_p$ on the sphere prove
	\eqref{eq:pointwise}. For each fixed $\omega$, we have
	$|W_p(r\omega)|/r\to\infty$ as $r\to0$.
	Thus $W_p$ is neither locally Lipschitz at zero nor of class $C^1$.

	For the oscillation, put
	\[
	\Phi_{W_p}(r)=\left(
	\avgint_{B_r}
	\left|W_p-\avgint_{B_r}W_p\dd x\right|^2\dd x
	\right)^{1/2}.
	\]
	Homogeneity gives $W_p(rx)=r^{\beta_n(p)}W_p(x)$.
	Changing variables $x=ry$ in the average yields
	$\avgint_{B_r}W_p\dd x
	=r^{\beta_n(p)}\avgint_{B_1}W_p\dd x$,
	and the same change of variables in the oscillation gives
	\begin{equation}\label{eq:excess-constant}
		\Phi_{W_p}(r)=r^{\beta_n(p)}\Phi_{W_p}(1).
	\end{equation}
	The factor $\Phi_{W_p}(1)$ is positive: $W_p$ is continuous,
	vanishes at zero, and is nonzero elsewhere, so it cannot be
	constant almost everywhere on $B_1$. Hence
	$\Phi_{W_p}(\tau r)/(\tau\Phi_{W_p}(r))
	=\tau^{\beta_n(p)-1}\to\infty$ as $\tau\to0$,
	which contradicts \eqref{eq:failure-linear-decay}
	for every finite $C_0$.

	It remains to verify the asserted regularity of $u_p$ itself.
	Theorem~\ref{thm:entire} gives $C^2$ regularity off zero.
	Homogeneity and boundedness of the angular derivatives give
	the equations
	$|\nabla u_p(x)|\le C|x|^{k_n(p)-1}$ and
	$|D^2u_p(x)|\le C|x|^{k_n(p)-2}$ whenever $x\ne0$.
	As $k_n(p)>2$, the gradient is differentiable at zero with
	derivative zero, and the Hessian tends to zero there.
	Therefore $u_p\in C^2(\R^n)$ and $D^2u_p(0)=0$.
\end{proof}

\begin{remark}
	For $n=2$, the same construction gives $k_2^\prime(2)=-1/2$,
	and therefore $\beta_2'(2)=0$.
	Thus the above argument gives no planar counterexample. This agrees with the scalar planar
	regularity results cited in the introduction.
\end{remark}

\appendix
\section*{Appendix}

We include the coordinate computations to fix signs and normalizations.
For $u(r\omega)=r^kf(\theta)$, where $\theta=\arccos\omega_n$,
radial and angular differentiation away from the axis give
$\nabla u=r^{k-1}(kf\omega+f'e_\theta),
|\nabla u|^2=r^{2k-2}Q$ and $Q=k^2f^2+(f')^2$.
For a vector field $r^a(b(\theta)\omega+c(\theta)e_\theta)$, the polar
divergence formula is
\[
\diver\big(r^a(b\omega+ce_\theta)\big)
=r^{a-1}\left((a+n-1)b+
\sin^{2-n}\theta\,\frac{d}{d\theta}
\big(\sin^{n-2}\theta\,c\big)\right).
\]
Indeed, the radial part is
$r^{1-n}\partial_r(r^{n-1+a}b)$, and the angular part is
$r^{a-1}\sin^{2-n}\theta\,\partial_\theta(\sin^{n-2}\theta\,c)$.
Apply this with $a=(k-1)(p-1)$, $b=kfQ^{(p-2)/2}$, and
$c=Q^{(p-2)/2}f'$ to obtain
\[
\Delta_pu=r^{(k-1)(p-1)-1}
\left[\sin^{2-n}\theta\,
\big(\sin^{n-2}\theta\,Q^{(p-2)/2}f'\big)'
+\Lambda(p,k)Q^{(p-2)/2}f\right].
\]
This yields \eqref{eq:angular}. When $n=2$ the weight is one and the
formula applies on each of the two semicircles between the poles.

To derive the phase system, write $c=\cos\psi$, $s=\sin\psi$,
$h=\rho'/\rho$, and $v=\psi'$. Differentiation of $f=\rho c$ and
the desired relation $f'=-k\rho s$ give
$ch-sv=-ks$.
Since $Q=k^2\rho^2$, the logarithmic derivative of $Q^{(p-2)/2}$ is
$(p-2)h$. Substituting
$f''=-k\rho(hs+vc)$ in \eqref{eq:angular}, and dividing by the positive
factor $k\rho Q^{(p-2)/2}\sin^{n-2}\theta$, gives
$(p-1)sh+cv=\big((p-1)k+n-p\big)c-(n-2)\cot\theta\,s$.
The determinant of this system is
$c^2+(p-1)s^2=D_p(\psi)>0$. Solving it gives exactly
$v=F$ and $h=H$ in \eqref{eq:phase}--\eqref{eq:amplitude}.

Conversely, if $\psi$ solves \eqref{eq:phase} and $\rho$ is given by
\eqref{eq:profile}, the same linear system gives
$f'=-k\rho\sin\psi$ for $f=\rho\cos\psi$, and then
\eqref{eq:angular}.

\section*{Acknowledgments}
Generative AI was used in preparing this manuscript. The authors take
full responsibility for the final content.

\bigskip

\noindent
\begin{minipage}{\textwidth}
  \small
  \raggedright
  Department of Geoinformatics and Cartography,\\
  Finnish Geospatial Research Institute, Finland\\[2pt]
  \textit{Email address:}
  \href{mailto:erno.kauranen@maanmittauslaitos.fi}
       {\nolinkurl{erno.kauranen@maanmittauslaitos.fi}}
\end{minipage}

\medskip

\noindent
\begin{minipage}{\textwidth}
  \small
  \raggedright
  Department of Mathematics and Statistics,\\
  University of Jyv\"askyl\"a,\\
  PO Box 35, 40014 Jyv\"askyl\"a, Finland\\[2pt]
  \textit{Email address:}
  \href{mailto:jarkko.siltakoski@jyu.fi}
       {\nolinkurl{jarkko.siltakoski@jyu.fi}}
\end{minipage}


\begin{thebibliography}{99}
	\bibitem{Aronsson1988}
	G. Aronsson,
	\emph{On certain $p$-harmonic functions in the plane},
	Manuscripta Math. \textbf{61} (1988), 79--101.

	
	\bibitem{BalciBehnDieningStorn2026}
	A. Balci, L. Behn, L. Diening, and J. Storn,
	\emph{Examples of $p$-harmonic maps},
	SIAM J. Math. Anal. \textbf{58} (2026), no.~1, 260--275.
	
	
	\bibitem{BalciDieningWeimar2020}
	A. Kh. Balci, L. Diening, and M. Weimar,
	\emph{Higher order Calder\'on--Zygmund estimates for the $p$-Laplace equation},
	J. Differential Equations \textbf{268} (2020), no.~2, 590--635.

	
	\bibitem{IwaniecManfredi1989}
	T. Iwaniec and J. J. Manfredi,
	\emph{Regularity of $p$-harmonic functions on the plane},
	Rev. Mat. Iberoam. \textbf{5} (1989), no.~1, 1--19.
	
	
	\bibitem{Krol1973}
	I. N. Kr\'ol,
	\emph{The behaviour of the solutions of a certain quasilinear equation
		near zero cusps of the boundary},
	Trudy Mat. Inst. Steklov. \textbf{125} (1973), 140--146, 233
	(in Russian).
	
	\bibitem{Lindqvist2006}
	P. Lindqvist,
	\emph{Notes on the $p$-Laplace Equation},
	Report 102, Department of Mathematics and Statistics,
	University of Jyv\"askyl\"a, 2006.
	
	\bibitem{Manfredi1988}
	J. J. Manfredi,
	\emph{$p$-harmonic functions in the plane},
	Proc. Amer. Math. Soc. \textbf{103} (1988), 473--479.
	
	\bibitem{PorrettaVeron2009}
	A. Porretta and L. V\'eron,
	\emph{Separable $p$-harmonic functions in a cone and related quasilinear
		equations on manifolds},
	J. Eur. Math. Soc. \textbf{11} (2009), no.~6, 1285--1305.



 
\bibitem{LlorenteManfrediTroyWu2019}
J. G. Llorente, J. J. Manfredi, W. C. Troy, and J.-M. Wu,
\emph{On $p$-harmonic measures in half-spaces},
Ann. Mat. Pura Appl. (4) \textbf{198} (2019), no.~4, 1381--1405.



\bibitem{Tolksdorf1983}
P. Tolksdorf,
\emph{On the Dirichlet problem for quasilinear equations
in domains with conical boundary points},
Comm. Partial Differential Equations \textbf{8} (1983),
no.~7, 773--817.


\bibitem{Tkachev2020}
V. G. Tkachev,
\emph{New explicit solutions to the $p$-Laplace equation
based on isoparametric foliations},
Differential Geom. Appl. \textbf{70} (2020), 101629.

 
\end{thebibliography}
\end{document}